\newtheorem{theorem}{Theorem}[section]
\newtheorem{proposition}[theorem]{Proposition}
\newtheorem{lemma}[theorem]{Lemma}
\newtheorem{definition}[theorem]{Definition}
\newtheorem{corollary}[theorem]{Corollary}
\newcommand{\R}{\mathbb{R}}
\newcommand{\vol}{{\rm Vol}}
\title{Boundedness of Laplacian eigenfunctions on manifolds of infinite volume}
\author{Leonardo P. Bonorino, Patr\'icia K. Klaser, Miriam Telichevesky}
\begin{document}

\maketitle

\begin{abstract}
In a Hadamard manifold $M$, it is proved that if $u$ is a $\lambda$-eigenfunction of the Laplacian that belongs to $L^p(M)$ for some $p \ge 2$,  then $u$ is bounded and $\|u\|_{\infty} \le C \|u\|_p,$ where $C$ depends only on $p$, $\lambda$ and on the dimension of $M$. This result is obtained in the more general context of a complete Riemannian manifold endowed with an isoperimetric function $H$ satisfying some integrability condition. In this case, the constant $C$ depends on $p,\lambda$ and $H.$
\end{abstract}

\section{Introduction}

Let $M$ be a complete Riemannian manifold with Laplace-Beltrami operator $\Delta.$ Given an open smooth subset $U\subset M$ and $\lambda\in \mathbb{R}$, we call $\lambda$-eigenfunction of $U$ any nontrivial $u\in C^2(U)$ that satisfies 

\begin{equation}\label{eq} \Delta u + \lambda u = 0\text{ in } U.\end{equation} If in addition $U$ has nonempty boundary, we require that $u$ vanishes continuously on $\partial U$. 

When $U$ is compact, according to the spectral theory for elliptic operators, the numbers $\lambda$ for which \eqref{eq} has a nontrivial classical solution are terms of an increasing unbounded sequence. They are called the eigenvalues of $-\Delta$ and are the only elements of its spectrum.  
If $U$ is noncompact, the situation is more delicate since the spectrum of $-\Delta$ can contain elements that are not eigenvalues. Furthermore, 
a $\lambda$-eigenfunction may not belong to $L^2(U)$ or even to $L^{\infty}(U)$.
This raises the questions whether a $\lambda$-eigenfunction is in some $L^p(U)$ and whether this implies its boundedness. 

In this setting, A. Cianchi and V.G. Maz'ya \cite{ChMa} investigate bounds for $L^2$ eigenfunctions in noncompact manifolds of finite measure. They considered a slightly different eigenvalue problem, which coincides with \eqref{eq} in the case of empty boundary. Under assumptions on the isoperimetric profile of $M$ (see Definition \ref{def do H}), 
the authors obtained the estimate
\begin{equation}\label{eq_estimativaporL2}
\|u\|_{L^{\infty}} \le C \|u\|_{L^2}\end{equation} for any $u$ $\lambda$-eigenfunction of $M$, where $C>0$ is a constant that depends only on the isoperimetric profile and on $\lambda$. Furthermore, their result is sharp in the sense that if $H$ is a suitable isoperimetric profile that does not satisfy the assumptions above mentioned, then there is a manifold $M$ with isoperimetric profile close to $H$
that admits an eigenfunction $u\in L^2(M)\backslash L^\infty(M).$

In the present work we prove that, under the same assumptions on
the isoperimetric profile considered by A. Cianchi and V.G. Maz'ya in
\cite{ChMa}, the estimative \eqref{eq_estimativaporL2} holds with no restriction on the volume of $M$.
Moreover, we also obtain the more general estimative
$\|u\|_{L^{\infty}} \le C \|u\|_{L^p}$ for $\lambda$-eigenfunctions in $L^p$, $p\ge 2.$ 

These new results establishing a bound for the $L^\infty$ norm are useful since for some interesting manifolds with infinite volume there exist $\lambda$-eigenfunctions in $L^2$. This is the case of Hadamard manifolds with curvature going to $-\infty$, for which the existence of eigenfunctions in $L^2$ is proved in \cite{DLi}.
Another application is for manifolds that do not admit eigenfunctions in $L^2,$ but in $L^p$ for $p > 2$. The hyperbolic spaces $\mathbb{H}^n$ are examples of such manifolds, where for $\lambda \le \lambda_1(\mathbb{H}^n)$ the $\lambda$-eigenfunctions belong to $L^P$ for $p>2/\sqrt{1-\lambda/\lambda_1},$ see  \cite{AA}.

\section{Main result}\label{secaoLp}

The statement of our main result requires the definition of isoperimetric functions in manifolds. This concept is a generalisation of isoperimetric profile, which is the largest isoperimetric function of a manifold. 

\begin{definition}\label{def do H}
Consider $M$ a complete Riemannian manifold. An isoperimetric function on $M$ is a function $H:\left[0, |M|\right]\rightarrow \R$ that satisfies 
\begin{equation}\label{eq do H}
H(|\Omega|)\leq |\partial \Omega| \; \forall\; \Omega \subset \subset M,
\end{equation}
where $|\,\cdot\,| $ stands for the Hausdorff measure.
If $M$ has infinite measure, $H$ is defined in $[0,\infty).$
\end{definition}

$H(s)$ gives an lower bound for the measure of the boundary of any set of volume $s.$ Since any Riemannian manifold is locally Euclidean, it is expectable that there exists $H$ such that $$H(s)\approx Cs^{n-1/n},$$ for $s$ close to zero. This is true in compact manifolds, but might fail for noncompact ones. Nevertheless, even a noncompact manifold may admit an isoperimetric function good enough for our result. This is the case when there exists $H$ for which the associated isoperimetric function (a.i.f.), given by $$H_a(t):=\int_0^t \frac{s}{H(s)^2}ds,\;t\in[0,|M|),$$ is well-defined.

%%%%%%%%%%%%%%%%%%%%%%%%%%%%%%%%%%%%%%%%%%%%%%%%%%%%%%%%%%%%%%

\begin{theorem}\label{L2eigenfunctionsInSomeManifoldsAreLinfinite}
 Let $M$ be a complete Riemannian manifold and let $H$ be an isoperimetric  function on $M$ with well-defined a.i.f. $H_a.$ Consider $U\subset M$ be a smooth domain, possibly unbounded, $\lambda>0$ and $p\ge 2$. There exists a constant $C=C(\lambda, p, H)$ such that for all nontrivial solution $w\in L^p(U)$ of
\begin{equation}\label{eq eigenf problem}
\left\lbrace \begin{array}{l}
            -\Delta v=\lambda v \text{ in } U\\[5pt]
             v=0 \text{ on }\partial U\\
             \end{array}\right.
\end{equation}
that belongs to $C^2(U)\cap C^0(\overline{U}),$ it holds
$$\|w\|_{\infty} \le C\|w\|_p. $$ 
Moreover, this constant is given by $C(\lambda, p, H)= 2(H_a^{-1}(\frac{1}{2\lambda}))^{-1/p}$.

\end{theorem}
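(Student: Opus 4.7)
The plan is to derive a pointwise differential inequality for the distribution function $\mu(t) := |\{x \in U : |w(x)| > t\}|$, integrate it over $[M/2, M]$ with $M := \|w\|_\infty$, and then conclude by Chebyshev. Setting $v := |w|$, Kato's inequality gives $-\Delta v \le \lambda v$ weakly on $U$, with $v = 0$ on $\partial U$. I would then use Lipschitz approximations of $\phi = \mathbf{1}_{\{v > t\}}$ in the weak inequality and pass to the limit, obtaining for almost every $t \in (0, M)$
\[
\int_{\{v = t\}} |\nabla v|\, d\mathcal{H}^{n-1} \;\le\; \lambda \int_{\{v > t\}} v\, dx.
\]

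Next, the isoperimetric estimate $\mathcal{H}^{n-1}(\{v = t\}) \ge H(\mu(t))$, together with Cauchy--Schwarz on the level set $\{v = t\}$ and the co-area identity $-\mu'(t) = \int_{\{v = t\}} |\nabla v|^{-1}\, d\mathcal{H}^{n-1}$, yields
\[
H(\mu(t))^2 \;\le\; \left(\int_{\{v=t\}} |\nabla v|\, d\mathcal{H}^{n-1}\right)(-\mu'(t)) \;\le\; \lambda\,(-\mu'(t)) \int_{\{v > t\}} v\, dx.
\]
Since $v \le M$ on the support of the rightmost integral, $\int_{\{v>t\}} v\, dx \le M\mu(t)$, producing the ODI
\[
\frac{-\mu'(t)\,\mu(t)}{H(\mu(t))^2} \;\ge\; \frac{1}{\lambda M}.
\]
Integrating from $t = M/2$ to $t = M$ and substituting $s = \mu(t)$ transforms the left-hand side into $\int_0^{\mu(M/2)} s/H(s)^2\, ds = H_a(\mu(M/2))$ and the right-hand side into $1/(2\lambda)$. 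Thus $\mu(M/2) \ge H_a^{-1}(1/(2\lambda))$, and Chebyshev's inequality $\|w\|_p^p \ge (M/2)^p \mu(M/2)$ rearranges to the claimed bound $M \le 2\,H_a^{-1}(1/(2\lambda))^{-1/p}\,\|w\|_p$.

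The main obstacles are technical. First, the indicator $\mathbf{1}_{\{v > t\}}$ is not an admissible test function; it must be approximated by Lipschitz truncations such as $\min\{1, (v-t)_+/\varepsilon\}$, after which the co-area formula and Sard-type arguments validate the limiting identity for a.e.\ $t$, and a spatial cutoff handles the possible noncompactness of $U$. Second, the crude bound $\int_{\{v>t\}} v\, dx \le M\mu(t)$ is only useful once $M < \infty$ is known a priori; excluding $M = +\infty$ requires an additional argument, for instance running the same ODI on the band $\{T/2 < v < T\}$ with the alternative splitting $\int_{\{v>t\}} v\, dx \le T\mu(t) + \|w\|_p \mu(T)^{(p-1)/p}$ (H\"older) and using the $L^p$-decay of $\mu$ from Chebyshev to force $\mu(T) = 0$ for $T$ sufficiently large, after which the clean argument above applies.
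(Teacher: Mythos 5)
Your route is genuinely different from the paper's: you run the level-set/isoperimetric argument directly on the eigenfunction $|w|$, whereas the paper exhausts $\Omega=\{w>\gamma\}$ by bounded pieces $\Omega_k$, applies the level-set machinery only to the auxiliary torsion-type problem $-\Delta u=\lambda K$ (Lemma \ref{lap cte gen}), gets the estimate for the automatically bounded approximants $w_k$ via Proposition \ref{theo autof}, and then identifies $\lim w_k$ with $w$ through an $H^1$ bound and a Brezis--Oswald uniqueness lemma. Conditional on $M:=\|w\|_\infty<\infty$, your computation is sound and recovers exactly the constant $2\bigl(H_a^{-1}(\tfrac{1}{2\lambda})\bigr)^{-1/p}$; in that regime your argument is arguably cleaner than the paper's.

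The genuine gap is the a priori finiteness of $M$, which for unbounded $U$ is the actual content of the theorem, and your proposed repair does not close. On the band $(T/2,T)$ your splitting gives
$$\frac{-\mu'(t)\,\mu(t)}{H(\mu(t))^2}\;\ge\;\frac{\mu(t)}{\lambda\bigl(T\mu(t)+\|w\|_p\,\mu(T)^{(p-1)/p}\bigr)},$$
and integrating yields only
$$H_a(\mu(T/2))-H_a(\mu(T))\;\ge\;\frac{(T/2)\,\mu(T)}{\lambda\bigl(T\mu(T/2)+\|w\|_p\,\mu(T)^{(p-1)/p}\bigr)},$$
whose right-hand side tends to $0$ whenever $\mu$ decays fast (e.g.\ $\mu(T)\ll\mu(T/2)$), so no contradiction with $\mu(T)>0$ for all $T$ is obtained; Chebyshev gives $\mu(T)\le\|w\|_p^pT^{-p}$ but never forces $\mu(T)=0$. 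The sharper bound $\int_{\{v>t\}}v\le\|w\|_p\,\mu(t)^{1-1/p}$ leads to $\int_{\mu(T)}^{\mu(t_0)}s^{1-1/p}H(s)^{-2}\,ds\ge(T-t_0)/(\lambda\|w\|_p)$, which gives a contradiction only when $s^{1-1/p}/H(s)^2$ is integrable at $0$ --- for the Hadamard profile $H(s)=Ds^{1-1/n}$ this requires $p>n/2$, so the case $p=2$, $n\ge4$ is not covered. This is precisely why the paper takes the detour through bounded domains (where $\sup w_k<\infty$ is free) and then must prove $\sup_k\|w_k\|_\infty=\|w\|_\infty$ via Lemmas \ref{whoisinLpisinH1} and \ref{uniquenessAux}; some substitute for that step is missing from your proposal. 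Two smaller points you should also address if you keep your route: justifying the identity $\int_{\{v=t\}}|\nabla v|\,da_t\le\lambda\int_{\{v>t\}}v\,dV$ on a possibly unbounded level set requires an integrability statement for $\nabla v$ (this is what the paper's Lemma \ref{whoisinLpisinH1} supplies), and the isoperimetric inequality \eqref{eq do H} is only postulated for $\Omega\subset\subset M$, so applying it to the possibly unbounded set $\{v>t\}$ needs an exhaustion argument of its own.
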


\subsection{Lemmata}

In this section we prove some lemmas required in the proof of Theorem \ref{L2eigenfunctionsInSomeManifoldsAreLinfinite}. Henceforth $M$ will be a complete Riemannian manifold endowed with an isoperimetric function $H$ that has a well defined a.i.f. $H_a.$ 

\begin{lemma}\label{lap cte gen}
Let $\Omega\subset M$ be a domain with finite measure and $u$ be a solution of $-\Delta u=1 \text{ in }\Omega,$ $u=0 \text{ on }\partial \Omega.$
Then $$\sup u \leq H_a\left(|\Omega|\right).$$ 
\end{lemma}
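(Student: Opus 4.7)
The plan is to run the standard Talenti--Maz'ya rearrangement argument on the distribution function of $u$. Since $-\Delta u=1>0$ and $u=0$ on $\partial\Omega$, the maximum principle gives $u\ge 0$, and elliptic regularity gives $u\in C^\infty(\Omega)$. Let $T=\sup u$ and, for $t\in[0,T)$, set $\Omega_t=\{u>t\}$ and $\mu(t)=|\Omega_t|$. The idea is to obtain a pointwise differential inequality for $\mu$ by combining three ingredients: the divergence theorem applied to $-\Delta u=1$ on $\Omega_t$, the coarea formula, and the isoperimetric inequality \eqref{eq do H} applied to $\Omega_t$.

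The execution proceeds in four steps. First, for a.e.\ regular value $t\in(0,T)$ (Sard's theorem), the outward unit normal to $\Omega_t$ along $\partial\Omega_t$ is $-\nabla u/|\nabla u|$, so the divergence theorem yields
$$\mu(t)=\int_{\Omega_t}(-\Delta u)\,dV=\int_{\partial\Omega_t}|\nabla u|\,d\sigma.$$
Second, by Cauchy--Schwarz and the coarea representation $\mu(t)=\int_t^T\!\int_{\{u=s\}}|\nabla u|^{-1}\,d\sigma\,ds$,
$$|\partial\Omega_t|^2\le\left(\int_{\partial\Omega_t}|\nabla u|\,d\sigma\right)\left(\int_{\partial\Omega_t}\frac{d\sigma}{|\nabla u|}\right)=\mu(t)\,(-\mu'(t)).$$
Third, the hypothesis $H(\mu(t))\le|\partial\Omega_t|$ gives $H(\mu(t))^2\le\mu(t)\,(-\mu'(t))$, hence
$$1\le\frac{-\mu'(t)\,\mu(t)}{H(\mu(t))^2}\qquad\text{for a.e. }t\in(0,T).$$
Fourth, integrating from $0$ to $T$ and making the monotone change of variables $s=\mu(t)$ (which decreases from $|\Omega|$ to $0$),
$$T\le\int_0^T\frac{-\mu'(t)\,\mu(t)}{H(\mu(t))^2}\,dt=\int_0^{|\Omega|}\frac{s}{H(s)^2}\,ds=H_a(|\Omega|),$$
which is the desired bound.

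The main technical nuisance is justifying the a.e.\ identities: absolute continuity (or at least monotonicity with the correct derivative a.e.) of $\mu$, validity of the coarea identity $-\mu'(t)=\int_{\{u=t\}}d\sigma/|\nabla u|$ on the set of regular values, and the applicability of \eqref{eq do H} to $\Omega_t$, which requires $\Omega_t\Subset M$; this last point uses that $u$ is continuous up to $\partial\Omega$ and vanishes there, so each superlevel $\{u>t\}$ with $t>0$ sits in a compact piece of $\Omega$ (using $|\Omega|<\infty$, one approximates by bounded subdomains if necessary). All of these are standard features of this rearrangement machinery, so the proof is essentially a bookkeeping exercise once the three ingredients above are chained together in the right order.
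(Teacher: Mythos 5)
Your core computation is exactly the paper's argument for the bounded case: the distribution function $\mu$, the divergence theorem identity $\mu(t)=\int_{\{u=t\}}|\nabla u|\,d\sigma$, Cauchy--Schwarz against $-\mu'(t)=\int_{\{u=t\}}|\nabla u|^{-1}d\sigma$, the isoperimetric bound $H(\mu(t))\le|\partial\Omega_t|$, and integration of $1\le -\frac{d}{dt}H_a(\mu(t))$ over $[0,\sup u]$. That part is fine and matches the paper step for step.

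The gap is in your treatment of the general case. The lemma allows $\Omega$ to be unbounded with finite measure, and two of your assertions fail there. First, ``the maximum principle gives $u\ge 0$'' is only valid on bounded domains; on an unbounded $\Omega$ a solution of $-\Delta u=1$, $u=0$ on $\partial\Omega$ need not be nonnegative without some additional control at infinity. Second, and more importantly, your claim that each superlevel set $\{u>t\}$ with $t>0$ ``sits in a compact piece of $\Omega$'' does not follow from continuity of $u$ up to $\partial\Omega$: finite measure does not imply boundedness, and $u$ could stay above $t$ along an unbounded end of $\Omega$ of finite volume, in which case $\Omega_t$ is not compactly contained in $M$ and the isoperimetric hypothesis \eqref{eq do H} cannot be applied to it. Your parenthetical ``one approximates by bounded subdomains if necessary'' is precisely the missing content: the paper exhausts $\Omega$ by bounded domains $\Omega_n$, applies the bounded case to each solution $u_n$ to get the uniform bound $H_a(|\Omega_n|)\le H_a(|\Omega|)$, uses the maximum principle on the bounded sets to see that $(u_n)$ is increasing, and passes to the limit. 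You should carry out this reduction explicitly rather than defer it, since without it the a.e.\ differential inequality for $\mu$ is not justified on unbounded $\Omega$.
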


\begin{proof}

We first consider the case that $\Omega$ is bounded and hence, the solution $u$ must be nonnegative.

\noindent Let $u$ be a solution of $-\Delta u=1 \text{ in }\Omega,$ $u=0 \text{ on }\partial \Omega.$
Consider $\mu$ the distribution function of $u,$ defined as $\mu(t)=|\{x\in \Omega\;|\;u(x)>t\}|.$
It is known that for almost all $t$ \begin{equation}\label{eq derivada da mu}
\mu'(t)=-\int_{\{u=t\}}\frac{1}{|\nabla u|}da_t,
\end{equation} 
where $da_t$ stands for the area element of $\{u=t\}.$

\noindent For all $t>0,$ the set $\{u(x)>t\}$ is at positive distance from the boundary $\partial \Omega.$ It is compactly contained in $\Omega,$ has boundary $\{u=t\}$ with inner normal vector $\frac{\nabla u}{|\nabla u|}$ well defined for almost all $t>0$ by Sard's Theorem.
If the inner normal vector is well defined for $t,$ we apply the Divergence Theorem on the differential equation $-\Delta u=1$, obtaining
\begin{equation}\label{eq teo do dive}
\mu(t)=\int_{\{u>t\}} 1\; dV=  \int_{\{u=t\}} |\nabla u| da_t.
\end{equation}

\noindent Applying the isoperimetric function $H$ in $\mu(t)$ and using Cauchy-Schwarz inequality, we obtain
$$H(\mu(t)) \le |\{u=t\}| \leq \left(\int_{\{u=t\}}\frac{1}{|\nabla u|}da_t\right)^{1/2} \left( \int_{\{u=t\}} |\nabla u| da_t\right)^{1/2}.$$

\noindent Hence, for almost all $t>0,$ expressions \eqref{eq derivada da mu} and \eqref{eq teo do dive} imply
$$H^2(\mu(t))\le -\mu'(t)\mu(t) \text{  and  }
1\leq \frac{\mu(t)(-\mu'(t))}{H^2(\mu(t))}=-\frac{d}{dt}H_a(\mu(t)).$$

\noindent Integrating the above inequality in $[0, \sup u],$ 
$$\sup u\leq -\left(H_a(\mu(\sup u))-H_a(|\Omega|)\right)=H_a(|\Omega|),$$
the proof is complete for bounded $\Omega$.

\

\noindent To the general case, let $u_n$ be the solution to the Dirichlet problem in $\Omega_n$, where $(\Omega_n)$ is an increasing sequence of bounded sets such that $\Omega=\cup \Omega_n$. From the first case, $(u_n)$ is uniformly bounded by $H_a\left(|\Omega|\right)$ and, from the maximum principle, is an increasing sequence. Hence $u_n$ converges to a solution $u$ that is bounded by $H_a(|\Omega|)$, proving the result.
\end{proof}

We remind that the first eigenvalue of the Laplacian operator in a bounded smooth domain $\Omega$, denoted by $\lambda_1(\Omega)$, is the smallest positive real number $\lambda$ for which $\Omega$ admits a nontrivial $\lambda-$eigenfunction. If $U$ is any domain, possibly unbounded, its first eigenvalue is $$\lambda_1(U)=\inf\{\lambda_1(\Omega)\;|\; \Omega\subset \subset U, \; \Omega \; {\rm smooth } \}.$$

If $\Omega$ is a bounded domain in $M$, $\lambda \le \lambda_1(\Omega)$ and $u$ satisfies $\Delta u+\lambda u \le 0$ in $\Omega$ with $u\ge 0$ on $\partial \Omega$, then it is well known that $u\ge 0$ in $\Omega$. Although this property may be false if $\Omega$ is unbounded, its converse is true.

\begin{lemma}\label{lemaNEW} Let $\lambda > 0$ and $u >0$ be a $C^2$ function that satisfies $\Delta u+ \lambda u \le 0$ in some domain $\Omega$ (possibly unbounded) of a Riemannian manifold $M$. Then 
$$ \lambda \le \lambda_1(\Omega). $$ 
\end{lemma}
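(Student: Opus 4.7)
My plan is to prove the Allegretto--Piepenbrink-type statement: the existence of a positive supersolution for $\Delta + \lambda$ on $\Omega$ forces $\lambda$ to lie below the bottom of the Dirichlet spectrum. By the definition of $\lambda_1(\Omega)$ as the infimum over smooth, relatively compact subdomains, it suffices to prove $\lambda \le \lambda_1(\Omega')$ for every $\Omega' \subset\subset \Omega$ smooth.

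Fix such an $\Omega'$ and let $\varphi > 0$ be the first Dirichlet eigenfunction of $-\Delta$ on $\Omega'$, so $\varphi \in H_0^1(\Omega') \cap C^2(\Omega')$ and $-\Delta \varphi = \lambda_1(\Omega') \varphi$. Because $\overline{\Omega'}$ is compact in $\Omega$ and $u$ is continuous and strictly positive there, $u$ is bounded away from $0$ on $\overline{\Omega'}$, so $\varphi^2/u$ is a legitimate test function in $H_0^1(\Omega')$. The idea is to test the supersolution inequality $-\Delta u \ge \lambda u$ against $\varphi^2/u$:
\begin{equation*}
\lambda \int_{\Omega'} \varphi^2 \;=\; \lambda \int_{\Omega'} u \cdot \frac{\varphi^2}{u} \;\le\; \int_{\Omega'} (-\Delta u)\,\frac{\varphi^2}{u}\;=\; \int_{\Omega'} \nabla u \cdot \nabla\!\left(\frac{\varphi^2}{u}\right),
\end{equation*}
where the last equality uses integration by parts and the vanishing of $\varphi$ on $\partial\Omega'$.

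Now comes the core algebraic step, a Picone-type inequality. Expanding the gradient gives
\begin{equation*}
\nabla u \cdot \nabla\!\left(\frac{\varphi^2}{u}\right) \;=\; \frac{2\varphi}{u}\,\nabla u \cdot \nabla \varphi - \frac{\varphi^2}{u^2}|\nabla u|^2,
\end{equation*}
and the pointwise inequality $0 \le \bigl|\nabla \varphi - (\varphi/u)\nabla u\bigr|^2$ rearranges exactly to
\begin{equation*}
\frac{2\varphi}{u}\,\nabla u \cdot \nabla \varphi - \frac{\varphi^2}{u^2}|\nabla u|^2 \;\le\; |\nabla \varphi|^2.
\end{equation*}
Combining this with the previous display and the Rayleigh identity $\int_{\Omega'} |\nabla \varphi|^2 = \lambda_1(\Omega') \int_{\Omega'} \varphi^2$ yields $\lambda \int_{\Omega'} \varphi^2 \le \lambda_1(\Omega') \int_{\Omega'} \varphi^2$, hence $\lambda \le \lambda_1(\Omega')$.

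The main point requiring care is the validity of the integration by parts with the test function $\varphi^2/u$: since $\varphi \in H_0^1(\Omega')$ vanishes on $\partial \Omega'$ and $1/u$ is smooth and bounded on $\overline{\Omega'}$, $\varphi^2/u$ is indeed in $H_0^1(\Omega')$, so the manipulation is justified either classically (if one approximates $\Omega'$ from inside by level sets of $\varphi$) or directly by density. Taking the infimum over $\Omega' \subset\subset \Omega$ completes the proof.
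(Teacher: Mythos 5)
Your proof is correct, but it takes a genuinely different route from the paper. You use the variational (Picone/Allegretto--Piepenbrink) argument: test the supersolution inequality against $\varphi^2/u$, where $\varphi$ is the first Dirichlet eigenfunction of a smooth subdomain $\Omega'\subset\subset\Omega$, apply the pointwise Picone inequality $\nabla u\cdot\nabla(\varphi^2/u)\le|\nabla\varphi|^2$, and conclude $\lambda\le\lambda_1(\Omega')$ directly for every such $\Omega'$; all the analytic steps (membership of $\varphi^2/u$ in $H_0^1(\Omega')$ because $u$ is bounded away from zero on the compact set $\overline{\Omega'}$, Green's identity for $u\in C^2(\overline{\Omega'})$ against an $H_0^1$ test function) check out. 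The paper instead argues by contradiction with the maximum principle: assuming $\lambda>\lambda_1(\Omega)$, it picks $\Omega_0\subset\subset\Omega$ with $\lambda>\lambda_1(\Omega_0)$, takes the positive first eigenfunction $u_0$ of $\Omega_0$, sets $\alpha=\sup_{\Omega_0}u_0/u$ (finite since $u>0$ on the compact closure), and observes that $\alpha u-u_0\ge 0$ satisfies $-\Delta(\alpha u-u_0)\ge 0$ yet attains its minimum $0$ at an interior point, contradicting the strong maximum principle. Your approach is direct rather than by contradiction, avoids the need to argue that the supremum of the ratio is attained in the interior, and is more robust (it extends verbatim to weak supersolutions and to quasilinear operators admitting a Picone identity); the paper's approach is more elementary in its toolkit, requiring only the classical strong maximum principle and no integration by parts. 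Either way the reduction to compactly contained smooth subdomains via the paper's definition of $\lambda_1(\Omega)$ as an infimum is handled correctly in both arguments.
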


\begin{proof} Suppose that $\lambda > \lambda_1(\Omega)$. There exists a bounded smooth domain $\Omega_0 \subset \subset \Omega$ with 
$$ \lambda > \lambda_1(\Omega_0) > \lambda_1(\Omega). $$ 
Let $u_0$ be a positive eigenfunction associated to $\lambda_1(\Omega_0)$ in $\Omega_0$. Since $\Omega_0$ is smooth, $u_0 \in C^0(\overline{\Omega}) \cap C^2(\Omega_0)$
and $u_0=0$ on $\partial \Omega_0$. Since $u \in C^2(\Omega)$, $u \in C^2(\overline{\Omega}_0)$.  The positivity of $u$ in $\Omega$ implies that
$$ \alpha := \sup_{\Omega_0} \frac{ u_0}{u}, $$
is finite. Therefore the function $\alpha u-u_0$ defined in $\overline{\Omega}_0$ contradicts the maximum principle since its minimum value is $0$ attained in an interior point of $\Omega_0,$ but $-\Delta (\alpha u-u_0)\geq 0$ in $\Omega_0.$
\end{proof}

The next proposition is a preliminary version of the main result for bounded domains and it is crucial in its proof. Not only it establishes a bound to the maximum of the solution of some Dirichlet problem by its $L^p$ norm, but gives an estimative that does not depend on the diameter, measure or boundary of its domain $\Omega$.
The idea in the proof of Theorem 2.2 is to build a sequence of functions defined in bounded domains that converges to the solution. The uniform boundedness of this sequence is guaranteed by the proposition below. 
We remark that the positive boundary data is necessary,
because there is no $\lambda$-eigenfunction in a bounded domain $\Omega$ for $\lambda < \lambda_1(\Omega)$.

\

The proof follows ideas from an estimative for quotient relating different norms of eigenfunctions of some elliptic operators in domains of $\mathbb{R}^n$ presented in \cite{LeonardoMontenegro} (Section 7). The associated isomperimetric function (a. i. f.) arises to adapt these ideas for manifolds. It is used in Lemma \ref{lap cte gen} to estimate the supremum of a function with constant Laplacian by the measure of its domain.

\begin{proposition}\label{theo autof}
Let $\Omega\subset M$ be a bounded domain and $w \in C^2\left(\Omega\right) \cap C^0\left(\bar{\Omega}\right)$ be a nontrivial solution of
$$\left\lbrace \begin{array}{l}
             -\Delta v=\lambda v \text{ in } \Omega\\[5pt]
             v=\gamma \text{ on }\partial \Omega\\
             \end{array}\right.$$
for some $0 < \lambda  \le \lambda_1(\Omega)$ and $\gamma \ge 0$. Then
\begin{equation}\label{estimativa Lifnito}
\|w\|_p^p\geq\left(\frac{\|w\|_\infty + \gamma}{2}\right)^p {H_a}^{-1}\left(\frac{\|w\|_\infty-\gamma}{2\lambda \|w\|_\infty }\right).
\end{equation}

\end{proposition}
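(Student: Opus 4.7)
The plan is to reduce the inequality to Lemma \ref{lap cte gen} by cutting $w$ at the level midway between its supremum and its boundary value, then controlling the measure of the resulting super-level set via the comparison principle. Since $\lambda \le \lambda_1(\Omega)$ and $w = \gamma \ge 0$ on $\partial\Omega$, the classical maximum principle yields $w \ge 0$ throughout $\Omega$. Let $M := \|w\|_\infty$. If $M = \gamma$, then $H_a^{-1}(0) = 0$ and the inequality is trivial, so I may assume $M > \gamma$, in which case $w$ must attain $M$ at some interior point of $\Omega$.

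Set $s := (M+\gamma)/2$ and $\Omega_s := \{x \in \Omega : w(x) > s\}$. Since $w = \gamma < s$ on $\partial\Omega$ and $w$ is continuous, $\Omega_s$ is a nonempty open set compactly contained in $\Omega$. On $\Omega_s$, the cutoff $\tilde w := w - s$ vanishes on $\partial\Omega_s$ and satisfies
$$ -\Delta \tilde w = \lambda w \le \lambda M. $$
Let $u$ solve $-\Delta u = 1$ in $\Omega_s$ with $u = 0$ on $\partial\Omega_s$. Applying the comparison principle to $\lambda M u - \tilde w$ on the bounded set $\Omega_s$ yields $\tilde w \le \lambda M\, u$, and then Lemma \ref{lap cte gen} gives $\sup u \le H_a(|\Omega_s|)$. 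Combining these,
$$ \frac{M-\gamma}{2} \;=\; \sup_{\Omega_s} \tilde w \;\le\; \lambda M \, H_a(|\Omega_s|), $$
so by monotonicity of $H_a$,
$$ |\Omega_s| \;\ge\; H_a^{-1}\!\left(\frac{M-\gamma}{2\lambda M}\right). $$

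The proposition then follows from the chain
$$ \|w\|_p^p \;\ge\; \int_{\Omega_s} w^p \;\ge\; s^p\, |\Omega_s| \;\ge\; \left(\frac{M+\gamma}{2}\right)^p H_a^{-1}\!\left(\frac{M-\gamma}{2\lambda M}\right), $$
where the first inequality uses $w \ge 0$ on $\Omega$ and the second uses $w \ge s$ on $\Omega_s$. The only delicate point is that $\Omega_s$ need not have smooth boundary, so applying Lemma \ref{lap cte gen} directly requires some care: either one observes that the distribution-function argument in that lemma extends to arbitrary bounded open sets (treating connected components separately, since $H_a$ is increasing), or one restricts to regular values of $w$ (dense by Sard) and passes to the limit $s \to (M+\gamma)/2$ using continuity of $H_a^{-1}$.
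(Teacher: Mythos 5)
Your proposal is correct and follows essentially the same route as the paper: restrict to the super-level set at height $(\|w\|_\infty+\gamma)/2$, bound $w$ there by comparison with the solution of the constant-Laplacian Dirichlet problem, apply Lemma \ref{lap cte gen} to bound that solution by $H_a$ of the measure of the level set, and invert. The only differences are cosmetic (you subtract the boundary value before comparing, where the paper keeps it), plus your added remark on the regularity of the level set, which the paper leaves implicit.
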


\begin{proof}
Denote by $K=\|w\|_\infty$. Notice that $w$ cannot change sign, because $\lambda \le \lambda_1(\Omega)$. We therefore assume without loss of generality that $w\ge 0$.

\noindent Let 
$$\widetilde{\Omega}=\left\{x \in \Omega \; | \; w(x)>\frac{K + \gamma}{2}\right\}.$$
Then,
\begin{equation}\label{norma lr}
\|w\|_p^p=\int_\Omega |w|^p dV \geq \int_{\widetilde{\Omega}} |w|^p dV\geq \left(\frac{K + \gamma}{2}\right)^p |\widetilde{\Omega}|
\end{equation}
On the other hand,
$$-\Delta w=\lambda w \leq \lambda K.$$
By the comparison principle, $w\le u$ in $\widetilde{\Omega}$ where $u$ is solution of
$$\left\lbrace \begin{array}{l}
             -\Delta v=\lambda K \text{ in }\widetilde{\Omega}\\[5pt]
             v=\displaystyle \frac{K + \gamma}{2} \text{ on }\partial \widetilde{\Omega}\\
             \end{array}\right.$$
Lemma \ref{lap cte gen} gives an upper bound for $u$
$$K \le\sup u \leq \frac{K + \gamma}{2} + \lambda K H_a\left(|\widetilde{\Omega}|\right).$$
Hence,
$$|\widetilde{\Omega}|\geq {H_a}^{-1}\left(\frac{K-\gamma}{2 \lambda K }\right).$$
Therefore, from inequality \eqref{norma lr}, we obtain
$$\|w\|_p^p\geq\left(\frac{K + \gamma}{2}\right)^p {H_a}^{-1}\left(\frac{K-\gamma}{2\lambda K }\right).$$

\end{proof}

\

\noindent {\bf Remark:} For the case $\gamma=0$, the result is true for any $\lambda > 0,$ because we may assume $\max w=\max |w|.$

~\\

\begin{lemma}
Let $\Omega \subset M$ be a domain with finite volume, possibly unbounded, and $u \in C^2(\overline{\Omega})$ be a classical solution of
$$ \left\{ \begin{array}{rcl} -\Delta u & = & \lambda u  \quad {\rm in } \quad \Omega \\[5pt]
                              u & > & \gamma \quad {\rm in} \quad \Omega \\[5pt]
                              u & = & \gamma \quad {\rm on} \quad \partial \Omega, \end{array} \right.
$$                              
where $\lambda > 0$ and $\gamma > 0$. If $u \in L^p(\Omega)$ for $p \ge 2$, then $u \in H^1(\Omega)$
and
$$ \int_{\Omega} |\nabla u|^2 dV \le \lambda \int_{\Omega} u^2 dV .$$
\label{whoisinLpisinH1} 
\end{lemma}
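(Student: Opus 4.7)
The idea is to test the equation $-\Delta u=\lambda u$ against a function of the form $(u-\gamma)\phi_n^{2}$, where $\phi_n$ is a smooth cutoff exhausting $M$. Since $u-\gamma$ vanishes on $\partial\Omega$ by hypothesis, this test function is admissible and produces no boundary contribution on $\partial\Omega$; the cutoff $\phi_n$ handles the possibly unbounded $\Omega$.

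First I would fix a basepoint $x_0\in\Omega$ and pick a monotone family $\phi_n\in C_c^\infty(M)$ with $0\le \phi_n\le \phi_{n+1}\le 1$, $\phi_n\equiv 1$ on the geodesic ball $B(x_0,n)$, $\mathrm{supp}\,\phi_n\subset B(x_0,2n)$, and $|\nabla \phi_n|\le 2/n$ (such cutoffs exist on any complete Riemannian manifold). Because $|\Omega|<\infty$ and $u\in L^p(\Omega)$ with $p\ge 2$, H\"older's inequality gives $u\in L^2(\Omega)$, and together with $u>\gamma>0$ this yields $u-\gamma\in L^2(\Omega)$. The test function $v_n:=(u-\gamma)\phi_n^2$ is then $C^2$ on $\overline\Omega$, has compact support in $\overline\Omega$, and vanishes on $\partial\Omega$, so Green's identity applied on a large enough bounded subdomain is legitimate with no boundary terms.

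Next I would expand the resulting identity. Multiplying the equation by $v_n$, integrating over $\Omega$ and developing $\nabla v_n$ yields
\begin{equation*}
\int_\Omega \phi_n^{2}|\nabla u|^{2}\,dV+2\int_\Omega \phi_n(u-\gamma)\,\nabla u\cdot\nabla\phi_n\,dV=\lambda\int_\Omega \phi_n^{2}\,u(u-\gamma)\,dV.
\end{equation*}
Setting $A_n:=\int_\Omega \phi_n^{2}|\nabla u|^{2}\,dV$, $B_n:=\int_\Omega (u-\gamma)^{2}|\nabla\phi_n|^{2}\,dV$, and $C_n:=\lambda\int_\Omega \phi_n^{2}\,u(u-\gamma)\,dV$, the Cauchy--Schwarz inequality applied to the cross term gives $A_n\le C_n+2\sqrt{A_nB_n}$. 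Viewed as a quadratic in $\sqrt{A_n}$, this forces $\sqrt{A_n}\le \sqrt{B_n}+\sqrt{B_n+C_n}$.

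Finally I would pass to the limit. Since $B_n\le (4/n^{2})\|u-\gamma\|_{L^2(\Omega)}^{2}\to 0$ and, by monotone convergence, $C_n\to \lambda\int_\Omega u(u-\gamma)\,dV<\infty$, the quadratic estimate bounds $\limsup A_n$ by $\lambda\int_\Omega u(u-\gamma)\,dV$. On the other hand, monotonicity of $\phi_n^{2}|\nabla u|^{2}$ and monotone convergence identify $\lim A_n$ with $\int_\Omega|\nabla u|^{2}\,dV$; this establishes $u\in H^1(\Omega)$, and the desired inequality follows from $u(u-\gamma)\le u^{2}$, which uses $u,\gamma>0$. The main subtlety is keeping the constant equal to $1$: a naive Young's inequality on the cross term would absorb $\epsilon A_n$ into the left side and leave the weaker bound $(1-\epsilon)\int|\nabla u|^{2}\le \lambda\int u^{2}+O(\epsilon^{-1}n^{-2})$, whereas the square-root manipulation of the quadratic inequality preserves the sharp constant in the limit.
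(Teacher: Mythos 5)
Your argument follows essentially the same route as the paper's: test the equation against a cutoff multiple of $u-\gamma$, control the cross term by Cauchy--Schwarz, and let the cutoff exhaust $\Omega$; the quadratic-inequality manipulation and the final use of $u(u-\gamma)\le u^2$ are also the paper's. The algebra is fine (your $\sqrt{A_n}\le\sqrt{B_n}+\sqrt{B_n+C_n}$ and the paper's bound $\int_A\eta_R^2|\nabla u|^2\le(2+\sqrt{4+\lambda})^2\int_A u^2$ play the same role), and your choice $|\nabla\phi_n|\le 2/n$ versus the paper's $|\nabla\eta_R|\le 2$ supported in an annulus is an immaterial variant.

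The one point where you are too quick is the sentence ``this test function is admissible and produces no boundary contribution on $\partial\Omega$.'' The lemma assumes nothing about the regularity of $\partial\Omega$ (and in the application $\Omega=\{w>\gamma\}$ is a superlevel set at a level $\gamma$ that need not be a regular value), so you cannot invoke Green's identity on $\Omega\cap B(x_0,2n)$ with a boundary integral over $\partial\Omega$: that requires either some boundary regularity or a proof that $(u-\gamma)\phi_n^2$ lies in $H_0^1(\Omega\cap B(x_0,2n))$, which is exactly the nontrivial claim being swept under the rug. The statement is true (a continuous $H^1$ function vanishing outside an open set belongs to $H_0^1$ of that set), but the standard proof of it is a level-set truncation --- and that is precisely what the paper does: it works with $\psi=u-\gamma-\varepsilon$ on $A_\varepsilon=\{u>\gamma+\varepsilon\}$, so that the test function $\eta^2\psi^+$ vanishes on a whole \emph{neighborhood} of $\partial\Omega$ (since $\psi\le-\varepsilon$ there), is therefore manifestly in $H_0^1$ with no assumption on $\partial\Omega$, and only at the end lets $\varepsilon\to 0$. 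You should either insert this $\varepsilon$-shift or explicitly prove the $H_0^1$ membership of your $v_n$; as written, this step is a genuine gap, albeit a repairable one that does not affect the rest of your argument.
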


\begin{proof}
For $\varepsilon>0$, let $\psi=\psi_{\varepsilon}$ be defined in $\Omega$ by $\psi(x)=u(x)-\gamma-\varepsilon$. Let $A=A_\varepsilon=\{\psi>0\}=\{u>\gamma+\varepsilon\}.$
We claim that $u\in H^1(A)$ and $$\int_{A} |\nabla u|^2 \, dV \le  \lambda \int_A  u^2 \; dV.$$
Hence, letting $\varepsilon$ go to zero, the result holds.

\

\noindent In order to prove the claim, fix $o\in M$ and, for each $R>1,$ let $\eta=\eta_R:M\rightarrow \mathbb{R}$ be a smooth radial function satisfying: $\eta_R (x)\in[0,1], \; \eta_R(x)= 1 \text{ if }x\in  B_{R-1}(o),\; \eta_R(x)=0 \text{ if }x\notin B_R(o) \text{ and } |\nabla \eta_R (x)|<2.$
 
\noindent Let $w=w_{R}$ be defined in $\Omega$ by  $w(x)= \eta^2(x) \psi(x) = \eta^2(x)(u(x)-\gamma-\varepsilon)$. Since $u, \eta \in C^2(\overline{\Omega}),$ we have $w \in H^1(\Omega \cap B_{R+1}).$
Hence, the positive part of $w$, $w^+= \max \{ w, 0\},$ is in $H^1(\Omega \cap B_{R+1})$.
Besides, $w^+ = 0$ in some neighborhood of $\partial (\Omega \cap B_{R+1})$ because $\psi \le - \varepsilon $ on $\partial \Omega $ and $\eta = 0 $ on $B_{R+1}\setminus B_R$.
This implies $w^+\in H_0^1(\Omega\cap B_{R+1})$ and
 $$ \int_{\Omega} \nabla w^+ \nabla u \, dV = \lambda \int_{\Omega} w^+ u \; dV$$
since $u$ is a weak solution. 

\noindent Observe that $w^+ = [\eta^2 \psi]^{+} = \eta^2 \psi^{+}$ and 
$\nabla w^+ = [\eta^2 \nabla u  + (u- \gamma - \varepsilon) 2\eta \nabla \eta] \chi_A$ almost everywhere in $\Omega$.
Therefore
$$ \int_A \eta^2 |\nabla u|^2 + 2\psi \eta \nabla \eta \nabla u \, dV = \lambda \int_A \eta^2 \psi u \; dV $$
and, from Cauchy-Schwarz inequality, 
\begin{equation} \int_{A} \eta^2 |\nabla u|^2 \, dV \le \lambda \int_A \eta^2 \psi u \; dV + 2\left( \int_A \eta^2 |\nabla u|^2 dV \right)^{1/2} \left( \int_A \psi^2 |\nabla \eta|^2 dV \right)^{1/2} .\label{estimateForGradu1}
\end{equation}
Therefore, using that $|\eta| \le 1$, $|\nabla \eta|\le 2$, and $|\psi| \le u$ in $A$, we get
$$ \int_{A} \eta^2 |\nabla u|^2 \, dV \le \lambda \int_A  u^2 \; dV + 4\left( \int_A \eta^2 |\nabla u|^2 dV \right)^{1/2} \left( \int_A u^2  dV \right)^{1/2} .$$
\noindent The integral $\displaystyle \int_A u^2 dV$ is finite since $u \in L^p(\Omega)$ for $p \ge 2$, $A \subset \Omega$ and $|\Omega| < \infty$.
Hence, for any $R > 0$, the corresponding $\eta=\eta_R$ satisfies
$$ \int_{A} \eta_R^2 |\nabla u|^2 \, dV \le  \left(2+ \sqrt{4 + \lambda}\right)^2  \int_A u^2 dV < \infty .$$
Since $\eta_R \to 1$ as $R \to \infty$, it follows that $ \displaystyle \int_{A} |\nabla u|^2 \, dV < \infty$.
Moreover, $u \in L^2(A)$ implies that
$$ \int_A u^2 |\nabla \eta|^2 dV \le 4 \int_{A\backslash B_R} u^2  dV \to 0 \quad {\rm as } \quad R \to \infty,$$
then making $R \to \infty$ in \eqref{estimateForGradu1}, we get
$$\int_{A} |\nabla u|^2 \, dV \le \lambda \int_A \psi u \; dV \le  \lambda \int_A  u^2 \; dV,$$ concluding the proof.

\end{proof}

The next lemma is some uniqueness result based on one established by Brezis and Oswald \cite{BO}.

\begin{lemma}\label{uniquenessAux}  
Let $\Omega \subset M$ be a domain with finite measure, possibly unbounded. If $u_1, u_2 \in H^1(\Omega)\cap C^2(\Omega)\cap C^0(\overline{\Omega})$ satisfying $0 < u_1 \le u_2$ are classical solutions of
$$ \left\{ \begin{array}{rcl} -\Delta u & = & au+b \quad {\rm in} \quad \Omega \\[5pt]
                              u & = & 0 \quad {\rm on} \quad \partial \Omega, \end{array} \right. $$
for $a,\;b$ positive constants, then $u_1 = u_2$.

\end{lemma}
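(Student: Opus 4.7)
The plan is to set $w := u_2 - u_1 \geq 0$ and show that $w \equiv 0$. Since both $u_i$ satisfy the same linear equation $-\Delta u = a u + b$, the difference satisfies
$$ -\Delta w \;=\; a w \quad \text{in } \Omega, \qquad w \;=\; 0 \quad \text{on } \partial \Omega. $$
Because $a \geq 0$ and $w \geq 0$, we have $-\Delta w \geq 0$, so $w$ is superharmonic. Applying the strong maximum (minimum) principle on each connected component of $\Omega$ gives the dichotomy: either $w \equiv 0$ (whence $u_1 = u_2$ and we are done), or $w > 0$ throughout $\Omega$.

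Assume the latter alternative. The key step is to exploit the symmetry of the Dirichlet form by testing the weak formulation of the equation for $u_1$ against $w \in H^1_0(\Omega)$, and conversely the equation for $w$ against $u_1 \in H^1_0(\Omega)$. This should yield
$$ \int_\Omega \nabla u_1 \cdot \nabla w \, dV \;=\; \int_\Omega (a u_1 + b)\, w \, dV, \qquad \int_\Omega \nabla w \cdot \nabla u_1 \, dV \;=\; \int_\Omega a u_1 w \, dV. $$
Subtracting, the gradient terms agree, the $a u_1 w$ terms cancel, and I obtain $\int_\Omega b\, w \, dV = 0$. Since $b > 0$ and $w > 0$ in $\Omega$, this is a contradiction, completing the proof.

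The main obstacle is justifying the two test-function identities when $\Omega$ may be unbounded. The hypotheses $u_1, u_2 \in H^1(\Omega) \cap C^0(\overline{\Omega})$ together with continuous vanishing on $\partial \Omega$ are exactly what is needed to place $u_1, w$ in $H^1_0(\Omega)$ via a cut-off: truncating by the radial function $\eta_R$ of Lemma \ref{whoisinLpisinH1}, applying Green's identity on $\Omega \cap B_R$, and letting $R \to \infty$ produces the two identities above. The finiteness of $|\Omega|$ together with $u_1, w, \nabla u_1, \nabla w \in L^2(\Omega)$ ensures that the interior integrals converge by dominated convergence and that the artificial boundary contributions on $\Omega \cap \partial B_R$ vanish in the limit. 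I expect this cut-off bookkeeping, rather than any conceptual step, to be the only real technical content of the proof.
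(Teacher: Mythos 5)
Your proof is correct, and at bottom it runs on the same engine as the paper's: both arguments place the relevant functions in $H^1_0(\Omega)$, pair the two weak formulations through the symmetric Dirichlet form, and land on the identity $\int_\Omega b\,(u_2-u_1)\,dV=0$. Indeed, the paper tests the equation for $u_1$ against $u_2$ and vice versa and obtains
$$\int_{\Omega}\Bigl(\frac{au_1+b}{u_1}-\frac{au_2+b}{u_2}\Bigr)u_1u_2\,dV=0,$$
whose integrand simplifies to exactly $b\,(u_2-u_1)$; your version reaches the same integral by testing against the difference $w=u_2-u_1$, which solves the homogeneous equation $-\Delta w=aw$. The differences are these. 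First, your strong-maximum-principle dichotomy is superfluous: $\int_\Omega b\,w\,dV=0$ with $b>0$, $w\ge 0$ and $w$ continuous already forces $w\equiv 0$, so that step can be dropped. Second, your argument leans on linearity (only for a linear equation does the difference of two solutions solve the homogeneous problem), whereas the paper's Brezis--Oswald formulation uses only that $t\mapsto f(t)/t$ is decreasing and therefore extends to the nonlinear setting described in the remark following the lemma; your route does not. The technical point you flag --- justifying the test-function identities on an unbounded $\Omega$ of finite measure via the cut-off $\eta_R$ --- is treated at essentially the same level of detail as in the paper, which simply asserts $u_1,u_2\in H^1_0(\Omega)$, and your sketch of why the cut-off closes the argument is adequate.
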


\begin{proof}
Since $u_1, u_2 \in H^1(\Omega) \cap C^2(\Omega)\cap C^0(\overline{\Omega})$ and $u_1=u_2=0$ on $\partial \Omega$,
we can prove that $u_1, u_2 \in H^1_0(\Omega)$. Hence, from the definition of weak solution,
$$ \int_{\Omega} \nabla u_1 \nabla u_2 \, dV = \int_{\Omega} (au_1+b) u_2 \, dV  \quad  {\rm and } \quad \int_{\Omega} \nabla u_2 \nabla u_1 \, dV = \int_{\Omega} (au_2+b) u_1 \, dV .$$
Thus 
$$ \int_{\Omega} \left( \frac{au_1+b}{ u_1} - \frac{a u_2+b}{ u_2} \right) u_1 u_2 \, dV = 0. $$
It is then clear that the integrand is nonnegative, which implies that it is equal to zero. Hence
$$ \frac{au_1+b}{u_1} = \frac{au_2+b}{u_2} ,$$
and, therefore, $u_1=u_2$ completing the proof.
\end{proof}

\noindent {\bf Remark:}
Lemma \ref{uniquenessAux} holds in a more general setting: if $\beta \ge 0$ and $f:\mathbb{R} \to \mathbb{R}$ is a Lipschitz function such that $f(t+\beta)/t$ is decreasing, then the same conclusion holds considering the problem $$ \left\{ \begin{array}{rcl} -\Delta u & = & f(u) \quad {\rm in} \quad \Omega \\[5pt]
                              u & = & \beta \quad {\rm on} \quad \partial \Omega. \end{array} \right. $$

\subsection{Proof of Theorem \ref{L2eigenfunctionsInSomeManifoldsAreLinfinite}}

We assume that $w >0$ in $U$, otherwise we split $U$ into two domains. Fix a point $o \in U,$ let $\gamma_0 = w(o)/2$ and, for $0 < \gamma \le \gamma_0$, consider the set $\Omega =\{ x \in U \, : \, w(x) > \gamma \}.$ Then $\Omega$ is not empty, $\overline{\Omega}\subset U$ and it has finite measure since
$$ \gamma^p | \Omega | \le \int_{\Omega} |w|^p \; dV < \infty. $$
Moreover, since $\Delta w + \lambda w = 0$ and $w$ is positive in $\Omega$, it follows from Lemma \ref{lemaNEW} that $\lambda \le \lambda_1(\Omega)$.
Now define $\Omega_k= \Omega \cap B_k(o)$ for $k \in \mathbb{N}$ and let $z_k \in H_0^1(\Omega_k)$ be the weak solution of 
\begin{equation}
\left\lbrace \begin{array}{rcll}
             -\Delta v & = &\lambda v + \lambda \gamma & \text{ in } \Omega_k\\[5pt]
             v & = & 0 &  \text{ on }\partial \Omega_k \\
             \end{array}\right.
\label{AuxDProblemZk}
\end{equation}If $\Omega_k=\Omega$, $z_k = w-\gamma$. Otherwise,
the existence of solution to this problem is a consequence of $\lambda \le \lambda_1(\Omega) < \lambda(\Omega_k)$ and the classical theory for eigenvalue problems in PDE. Observe that $w_k:=z_k + \gamma$ is a weak solution of 
$$\left\lbrace \begin{array}{rcll}
             -\Delta v & = &\lambda v  & \text{ in } \Omega_k\\[5pt]
             v & = & \gamma &  \text{ on }\partial \Omega_k \\
             \end{array}\right.$$
and that $w_k = \gamma \le w$ on $\partial \Omega_k$. Then $w_k < w$ in $\Omega_k$ since $\lambda \le \lambda_1(\Omega) \le \lambda_1 (\Omega_k)$. The equality $\lambda_1(\Omega)=\lambda_1(\Omega_k)$ only happens if $\Omega=\Omega_k$.
By the same argument,  $w_k \, \ge \, w_m > \gamma$ for any $k>m$.
Hence from Proposition \ref{theo autof},
for each $k\in\mathbb{N},$
\begin{equation}
\left(\frac{\|w_k\|_{\infty} + \gamma}{2}\right)^p {H_a}^{-1}\left(\frac{\|w_k\|_{\infty}-\gamma}{2\lambda \|w_k\|_{\infty} }\right) \le \|w_k\|_p^p \le \|w\|_p^p.
\label{LInfiniteEstimateForw_k}
\end{equation}
Since ${H_a}^{-1}$ is an increasing positive function, the left-hand side diverges to infinity if $\|w_k\|_{\infty}$ goes to infinity.
Hence $(\|w_k\|_{\infty})$ is a bounded sequence and, since it is also increasing, it converges pointwise to some bounded function $\bar{w}$ defined on $\Omega$. Moreover $w_k \le \bar{w} \le w$. To prove that $w$ is bounded, it is sufficient to show that $w=\bar{w}$ in $H^1\left(\Omega\right).$ We observe that since $\Omega$ has finite measure and $w\in L^{p}(\Omega),$ $p\ge 2,$ Lemma \ref{whoisinLpisinH1} implies that $w\in H^1(\Omega).$

\

\noindent Let $z=w-\gamma$ and $\bar{z}=\bar{w} -\gamma.$ Then $z_k \le \bar{z} \le z$ and $z_k$ converges pointwise to $\bar{z}$. We have to show that $\bar{z}=z$. The idea is to verify that $z, \bar{z} \in H^1(\Omega)\cap C^2(\Omega)\cap C^0(\overline{\Omega})$ and satisfy the same Dirichlet problem. 

\noindent First, notice that $z \in C^2(\overline{\Omega})$ since $z=w-\gamma$, $w \in C^2(U)$ and $\overline{\Omega} \subset U$. Besides $z\in H^1(\Omega)$ because $0 \le z =w-\gamma \le w$ and $w \in H^1(U)$. Moreover $z$ is a solution of 
\begin{equation}
\left\{ \begin{array}{rcll}
             -\Delta v & = &\lambda v + \lambda \gamma & \text{ in } \Omega\\[5pt]
             v & = & 0 &  \text{ on }\partial \Omega. \\
             \end{array}\right. 
\label{DProblemAux1} 
\end{equation} 
We now prove similar results for $\bar{z}$. Since $z_k$ is solution of \eqref{AuxDProblemZk}, $0 \le w_k-\gamma = z_k \le w_k \le w$ and $\Omega_k \subset \Omega$, it follows that
$$ \int_{\Omega_k} |\nabla z_k|^2 dV = \lambda \int_{\Omega_k} z_k^2 \; dV + \lambda \gamma \int_{\Omega_k} z_k \; dV \le \lambda \int_{\Omega} w^2 \; dV + \lambda \gamma \int_{\Omega} w \; dV< \infty.$$
Hence the sequence $(\|\nabla z_k\|_2)$ is bounded.  Moreover, $z_k \le w$ implies that $(\|z_k\|_2)$ is bounded. Therefore, up to some subsequence, $z_k$ converges weakly to some function in $H^1_{0}(\Omega)$. This limit is $\bar{z}$, since $z_k$ converges pointwise to $\bar{z}$. Thus $\bar{z}$ is a weak solution of \eqref{DProblemAux1}. Then $\bar{z}$ is a classical solution and it is of class $C^2(\Omega)$. The continuity of $\bar{z}$ on $\partial \Omega$ is a consequence of $0 \le \bar{z} \le z$ and of the fact that $z$ vanishes continuously on $\partial \Omega.$

\noindent Therefore, $z$ and $\bar{z}$ satisfy the hypotheses of Lemma \ref{uniquenessAux}, which implies uniqueness of solution of problem \eqref{DProblemAux1}. Hence $z=\bar{z}$ and $w=\bar{w}$ in $\Omega,$ proving that $w$ is bounded and $\|w\|_{L^{\infty}(\Omega)}=\|\bar{w}\|_{L^{\infty}(\Omega)}$.

\noindent Furthermore, since $w_k \to \bar{w}$ and $w_k \le \bar{w}$, we have 
$$\|w_k\|_{L^{\infty}(\Omega)} \to \|\bar{w}\|_{L^{\infty}(\Omega)} = \|w\|_{L^{\infty}(\Omega)}.$$ 
This implies that \eqref{LInfiniteEstimateForw_k} also holds replacing $w_k$ by $w$. Observe also that $\gamma > 0$ can be chosen so small as we want and then it can be omitted in \eqref{LInfiniteEstimateForw_k}, obtaining
$$\left(\frac{\|w\|_{\infty}}{2}\right)^p {H_a}^{-1}\left(\frac{1}{2\lambda}\right) \le  \|w\|_{L^p(\Omega)}^p,$$
completing the proof. \\

\hfill q.e.d.
%%%%%%%%%%%%%%%%%%%%%%%%%%%%%%%%%%%%%%%%%%%%%%%%%%%%%%%%%%%%%%

\section{Application to Hadamard manifolds}

Hadamard manifolds admit some isoperimetric function with well-defined a.i.f. and, therefore, Theorem \ref{L2eigenfunctionsInSomeManifoldsAreLinfinite} applies to them. Indeed it is a consequence of
Theorem \ref{teo Hadamard tem aif} proved by Christopher B. Croke \cite{Cr1} for $n \ge 3$. For $n=2$ the theorem also holds even for more general manifolds according to E. F. Beckenbach and T. Rad\'o \cite{BR}.

\begin{theorem}\label{teo Hadamard tem aif} Let $N^n$ be a compact Riemmanian manifold (with boundary) of nonpositive sectional curvature.
Suppose that any geodesic ray in $N$ minimizes length up to the point it hits the boundary. Then there exists a positive constant $D(n)$, that depends only on $n$, such that
$$ \vol(\partial N) \ge D(n) (\vol(N))^{1-1/n}.$$
\end{theorem}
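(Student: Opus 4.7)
The plan is to use Croke's integral-geometric argument, which combines the Rauch comparison theorem with Santal\'o's formula on the unit tangent bundle $UN$.

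\emph{Two ingredients.} First, the minimizing hypothesis ensures that for every interior $p \in N$ the exponential map is a radial diffeomorphism onto $N\setminus\{p\}$ up to the exit time $\tau(p,v):= \sup\{t\ge 0 : \gamma_{p,v}([0,t])\subset N\}$. Together with nonpositive sectional curvature, Rauch comparison yields $\sqrt{g(t,v)}\ge t^{n-1}$ in geodesic polar coordinates at $p$, so integrating the volume element gives
$$\vol(N)\;\ge\; \frac{1}{n}\int_{U_pN}\tau(p,v)^n\,dv.$$
Second, Santal\'o's formula for manifolds with boundary expresses any Liouville integral over $UN$ as
$$\int_{UN}f\,dV\,dv \;=\;\int_{\partial N}\int_{\langle v,\nu\rangle>0}\int_0^{\ell(q,v)} f(\gamma_{q,v}(t),\gamma'_{q,v}(t))\,\langle v,\nu\rangle\,dt\,dv\,dq,$$
where $\nu$ is the inward unit normal and $\ell(q,v)$ the exit time starting at $q\in\partial N$. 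Applied to $f\equiv 1$ this recovers $\int_{(S\partial N)^+}\langle v,\nu\rangle \ell\,dv\,dq=\omega_{n-1}\vol(N)$.

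\emph{Combining them.} Integrate the Rauch inequality over $p\in N$ and use Fubini to get $\vol(N)^2\ge \frac{1}{n}\int_{UN}\tau^n\,dV\,dv$. Apply Santal\'o to $f=\tau^n$, noting that along $\gamma_{q,v}$ one has $\tau(\gamma_{q,v}(t),\gamma'_{q,v}(t))=\ell(q,v)-t$; the right-hand side becomes $\frac{1}{n(n+1)}\int_{(S\partial N)^+}\langle v,\nu\rangle\,\ell^{n+1}\,dv\,dq$. Finally apply H\"older in the weighted measure $\langle v,\nu\rangle dv\,dq$ with exponents $(n+1,\,(n+1)/n)$ to $\int\langle v,\nu\rangle\ell\,dv\,dq=\omega_{n-1}\vol(N)$, using the elementary computation $\int_{S_+^{n-1}}\langle v,\nu\rangle\,dv=\omega_{n-2}/(n-1)$ to bound the second factor by a multiple of $\vol(\partial N)$. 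Chaining the estimates produces $\vol(\partial N)^n\ge c(n)\vol(N)^{n-1}$ with an explicit $c(n)$, which is the desired inequality.

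\emph{Main obstacle.} The delicate point is justifying Santal\'o's formula in this setting: it is classical when $\partial N$ is strictly convex and every geodesic reaches $\partial N$ transversally, whereas here one has only the minimizing hypothesis and possibly tangential contacts with $\partial N$. One must verify that the geodesic flow on $UN$ is defined for all positive times up to the exit and carry out the change of variables based on the Jacobian of $(q,v,t)\mapsto \gamma_{q,v}(t)$. The second subtle point is matching the H\"older exponents so as to obtain exactly $(n-1)/n$ in the volume exponent; this succeeds precisely because H\"older is applied against the \emph{weighted} measure $\langle v,\nu\rangle dv\,dq$ produced naturally by Santal\'o. For $n=2$ the Rauch step admits a replacement via a Gauss-Bonnet argument as in Beckenbach-Rad\'o, which extends the conclusion to a broader class of surfaces.
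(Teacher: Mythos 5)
The paper does not prove this statement at all: it is quoted as a known result, attributed to Croke \cite{Cr1} for $n\ge 3$ and to Beckenbach--Rad\'o \cite{BR} for $n=2$, and is used as a black box to produce the isoperimetric function $H(s)=D(n)s^{1-1/n}$ on Hadamard manifolds. So there is no in-paper argument to compare against; what you have written is a reconstruction of Croke's own integral-geometric proof, and as an outline it is essentially correct. The exponent bookkeeping checks out: writing $I_k=\int_{(S\partial N)^+}\langle v,\nu\rangle\,\ell^k\,dv\,dq$, your three identities give $\vol(N)^2\ge \frac{1}{n(n+1)}I_{n+1}$, $I_1=\omega_{n-1}\vol(N)$, $I_0=\frac{\omega_{n-2}}{n-1}\vol(\partial N)$, and H\"older in the form $I_1\le I_{n+1}^{1/(n+1)}I_0^{n/(n+1)}$ yields $\vol(\partial N)^n\ge c(n)\vol(N)^{n-1}$ with $c(n)=\frac{(n-1)^n\omega_{n-1}^{n+1}}{n(n+1)\,\omega_{n-2}^{\,n}}$, exactly the claimed form. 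Two details deserve more emphasis than you give them: (i) the first inequality $\vol(N)\ge\frac1n\int_{U_pN}\tau^n dv$ needs $\exp_p$ to be injective on $\{tv: 0\le t<\tau(p,v)\}$ up to a null set, which follows from the minimizing hypothesis (two distinct preimages would force two distinct minimizing geodesics, placing the point in the measure-zero cut locus); and (ii) every geodesic exits in finite time because a minimizing segment has length at most $\mathrm{diam}(N)$, which is what makes $\tau$ and $\ell$ finite and Santal\'o applicable after discarding the null set of tangential directions. With those points made precise this is a complete proof; for the purposes of this paper, however, the citation suffices and nothing in the main results depends on the value of $D(n)$ beyond its positivity.
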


If $M$ is a Hadamard manifold, any smooth compact subdomain satisfies the hypotheses of the theorem.  Hence $H(s)=D(n) s^{1-1/n}$
is an isoperimetric function on $M$ and 
$$ H_a(t)= \int_0^t \frac{s}{H(s)^2}ds = \frac{n}{2(D(n))^2}t^{2/n}.$$

\begin{corollary}\label{hadamard l2 implica limitada} Suppose that $M$ is a Hadamard manifold, $U$ is an unbounded domain of $M$ and $w$ is an eigenfunction (solution of \eqref{eq eigenf problem}) in $U$ associated to $\lambda>0.$
If $w \in L^p(U)$ for some $p \ge 2$, then $w$ is bounded and 
$$ \|w\|_{L^{\infty}(U)} \le \frac{2 \,(n \lambda)^{n/2p}} {\left(D(n)\right)^{n/p}} \|w\|_{L^p(U)}.$$
\end{corollary}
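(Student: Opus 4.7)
The plan is to apply Theorem~\ref{L2eigenfunctionsInSomeManifoldsAreLinfinite} using the explicit isoperimetric function that arises from Theorem~\ref{teo Hadamard tem aif}. There are essentially two things to check: first, that $H(s) = D(n)s^{1-1/n}$ is an isoperimetric function on any Hadamard manifold, and second, that plugging this choice of $H$ into the constant $C(\lambda, p, H) = 2\bigl(H_a^{-1}(1/(2\lambda))\bigr)^{-1/p}$ recovers the constant stated in the corollary.

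For the first point I would invoke the Cartan--Hadamard theorem: every geodesic on $M$ is globally length-minimizing, so in particular any geodesic ray starting inside a smooth bounded subdomain $N \subset\subset M$ minimizes length up to the point it meets $\partial N$. Hence Theorem~\ref{teo Hadamard tem aif} applies to every such $N$ and yields $|\partial N| \ge D(n) |N|^{1-1/n}$, which is precisely the statement that $H(s) = D(n) s^{1-1/n}$ is an isoperimetric function on $M$. A direct integration gives
$$H_a(t) = \int_0^t \frac{s}{D(n)^2 s^{2-2/n}}\,ds = \frac{n}{2\,D(n)^2}\, t^{2/n},$$
which is well defined on $[0,\infty)$. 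Therefore $M$ satisfies the hypotheses of Theorem~\ref{L2eigenfunctionsInSomeManifoldsAreLinfinite}, and we obtain $\|w\|_\infty \le C(\lambda,p,H)\|w\|_p$ for every nontrivial eigenfunction $w \in L^p(U)$ with $p \ge 2$.

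For the second point, inverting the a.i.f.\ gives $H_a^{-1}(r) = \bigl(2 D(n)^2 r/n\bigr)^{n/2}$. Setting $r = 1/(2\lambda)$ yields
$$H_a^{-1}\!\left(\frac{1}{2\lambda}\right) = \left(\frac{D(n)^2}{n\lambda}\right)^{n/2},$$
and therefore
$$C(\lambda, p, H) = 2\bigl(H_a^{-1}(1/(2\lambda))\bigr)^{-1/p} = \frac{2\,(n\lambda)^{n/(2p)}}{D(n)^{n/p}},$$
which is exactly the constant asserted in the corollary. Since the argument reduces to an application of the main theorem followed by a short algebraic substitution, there is no substantive obstacle; the only step requiring care is the verification of Croke's hypothesis on arbitrary smooth bounded subdomains, and this follows in one line from the Cartan--Hadamard theorem.
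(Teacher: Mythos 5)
Your proposal is correct and follows exactly the route the paper takes: the text preceding the corollary verifies via Croke's theorem (Theorem \ref{teo Hadamard tem aif}) that $H(s)=D(n)s^{1-1/n}$ is an isoperimetric function on a Hadamard manifold with $H_a(t)=\frac{n}{2D(n)^2}t^{2/n}$, and the corollary is then the substitution of $H_a^{-1}(1/(2\lambda))$ into the constant of Theorem \ref{L2eigenfunctionsInSomeManifoldsAreLinfinite}. Your algebra checks out and yields the stated constant.
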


In \cite{DLi}, Donnelly and Li proved the following theorem about the existence of $L^2$ eigenfunctions on some manifolds with infinite volume. 
According to Corollary \ref{hadamard l2 implica limitada}, these eigenfunctions are bounded.

\begin{theorem}[Theorem 1.1 of \cite{DLi}] Let $M$ be a complete simply connected negatively curved Riemannian manifold. Fix $p\in M$ and write $K(r)=\sup\{K(x,\pi)\,|\,d(x,p)\ge r\},$ where $K$ is the sectional curvature of $M$ and $\pi$ denotes a $2-$plane in $T_xM.$ If $\lim K(r)=-\infty$ as $r \to \infty$, then $\Delta$ has pure point spectrum.
\end{theorem}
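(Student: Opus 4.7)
The plan is to show that under the curvature hypothesis the essential spectrum of $-\Delta$ is empty; pure point spectrum then follows from standard spectral theory for nonnegative self-adjoint operators, since the remainder of the spectrum consists of eigenvalues of finite multiplicity which can accumulate only at $+\infty$.

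The main tool will be the Persson-type characterization of the bottom of the essential spectrum,
$$ \inf \sigma_{\mathrm{ess}}(-\Delta) \;=\; \sup_{K\subset\subset M}\lambda_1(M\setminus K), $$
applied to the compact exhaustion $K_r=\overline{B_r(p)}$. The problem therefore reduces to proving that $\lambda_1(M\setminus K_r)\to +\infty$ as $r\to\infty$.

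To obtain this divergence I would use a McKean-type argument. Since $M$ is Hadamard, the distance function $\rho(x)=d(x,p)$ is smooth on $M\setminus\{p\}$ with $|\nabla\rho|\equiv 1$. Setting $c(r):=\sqrt{|K(r)|}$, the Laplacian comparison theorem applied with the upper sectional curvature bound $-c(r)^2$ available on $\{\rho\ge r\}$ yields
$$ \Delta\rho(x)\;\ge\;(n-1)\,c(r)\coth\bigl(c(r)\rho(x)\bigr)\;\ge\;(n-1)\,c(r) \qquad \text{whenever } \rho(x)\ge r. $$
For any test function $\phi\in C_c^\infty(M\setminus K_r)$, integration by parts together with $|\nabla\rho|\equiv 1$ then gives
$$ (n-1)\,c(r)\,\|\phi\|_2^2 \;\le\; \int_M \phi^2\,\Delta\rho\,dV \;=\; -2\int_M \phi\,\langle\nabla\phi,\nabla\rho\rangle\,dV \;\le\; 2\,\|\phi\|_2\,\|\nabla\phi\|_2. $$
Rearranging and taking the infimum over $\phi$ yields
$$ \lambda_1(M\setminus K_r)\;\ge\; \frac{(n-1)^2}{4}\,|K(r)|, $$
whose right-hand side diverges by hypothesis. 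Hence $\inf\sigma_{\mathrm{ess}}(-\Delta)=+\infty$, completing the proof.

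The main obstacle I anticipate is the careful setup of the comparison step: one has to verify that the quantity $K(r)$, defined as a supremum over \emph{all} $2$-planes based at points at distance at least $r$, genuinely serves as the uniform upper bound on radial sectional curvatures required to feed the Laplacian comparison theorem along geodesics emanating from $p$ and restricted to $\{\rho\ge r\}$. A secondary technicality is the justification of Persson's formula in this purely Riemannian setting, which is standard but should be recorded: it follows by the Weyl criterion, constructing approximate eigenfunctions supported outside arbitrarily large compact sets.
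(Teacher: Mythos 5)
The paper does not actually prove this statement: it quotes it verbatim from Donnelly and Li \cite{DLi} as background for Corollary 3.2, so the relevant benchmark is the original proof in \cite{DLi} --- and your skeleton (Persson/decomposition principle plus a McKean-type lower bound for $\lambda_1$ of the complement of balls) is exactly their strategy. The genuine gap sits precisely at the step you yourself flagged as the anticipated obstacle, and the worry is well founded: the inequality
$$ \Delta\rho(x)\;\ge\;(n-1)\,c(r)\coth\bigl(c(r)\rho(x)\bigr), \qquad \rho(x)\ge r, $$
does \emph{not} follow from the curvature bound $K\le -c(r)^2$ on $\{\rho\ge r\}$. The Hessian/Laplacian comparison theorem that produces $\coth(c\rho)$ needs the bound $K\le -c^2$ along the \emph{entire} radial geodesic from $p$ to $x$, whereas inside $B_r(p)$ you only know $K\le 0$. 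The claim is genuinely false in general: take a rotationally symmetric Hadamard metric nearly flat on $B_r(p)$ and very negatively curved outside; at points with $\rho$ slightly larger than $r$ one has $\Delta\rho\approx (n-1)/r$, far below $(n-1)c(r)$ when $c(r)$ is large. Consequently your displayed bound $(n-1)c(r)\|\phi\|_2^2\le 2\|\phi\|_2\|\nabla\phi\|_2$ fails for test functions concentrated near $\partial B_r(p)$.

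The repair is standard and keeps your architecture intact: run the Riccati comparison with the correct initial condition. Since $M$ is Hadamard, ${\rm Hess}\,\rho\ge 0$ everywhere; along a unit-speed radial geodesic the shape operator $S(t)$ of the distance spheres satisfies $S'+S^2+R=0$, and comparison on $[r,\infty)$ using $K\le -c(r)^2$ there together with $S(r)\ge 0$ gives $S(t)\ge c(r)\tanh\bigl(c(r)(t-r)\bigr)\,{\rm Id}$ on the orthogonal complement, hence
$$ \Delta\rho(x)\;\ge\;(n-1)\,c(r)\tanh\bigl(c(r)(\rho(x)-r)\bigr) \qquad \text{for } \rho(x)\ge r. $$
Running your integration-by-parts argument for $\phi\in C_c^\infty\bigl(M\setminus \overline{B_{2r}(p)}\bigr)$ then yields $\lambda_1\bigl(M\setminus \overline{B_{2r}(p)}\bigr)\ge \tfrac{(n-1)^2}{4}\,c(r)^2\tanh^2\bigl(c(r)\,r\bigr)$, and since $c(r)\to\infty$ forces $\tanh(c(r)\,r)\to 1$, the right-hand side still diverges. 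With this one substitution the remaining steps --- Persson's characterization (which in this Riemannian setting is the decomposition principle proved in the very same Donnelly--Li paper) and the passage from empty essential spectrum to pure point spectrum with eigenvalues of finite multiplicity accumulating only at $+\infty$ --- are correct, so your argument closes after this fix.
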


\bigskip

INSTITUTO DE MATEM\'ATICA

Universidade Federal do Rio Grande do Sul, Brasil\\

Leonardo Prange Bonorino

e-mail address: bonorino@mat.ufrgs.br\\

Patr\'icia Kruse Klaser

e-mail address: patricia.klaser@ufrgs.br\\

Miriam Telichevesky

e-mail address: miriam.telichevesky@ufrgs.br\\


\begin{thebibliography}{[1]}
\bibitem{AA} Ancona, A.: Sur les fonctions propres positives des vari\'et\'es de Cartan-Hadamard, Comment. Math. Helvetici {\bf 64}, 62-83 (1989).

\bibitem{LeonardoMontenegro} Bonorino, L., Montenegro, F.: Schwarz symmetrization and comparison results for nonlinear elliptic equations and eigenvalue problems, Annali di Matematica Pura ed Applicata {\bf 192 (6)} 987--1024 (2013).

\bibitem{BO} Brezis, H., Oswald, L.:  Remarks on sublinear elliptic
equations, Nonlinear Anal. T.M.A. {\bf 10}, 55--64 (1986).

\bibitem{BR} Beckenbach, E. F., Rad\'o, T.: Subharmonic functions and surfaces of negative curvature, Trans. Amer. Math. Soc. {\bf 35}, 662--674 (1933).

\bibitem{ChMa} Cianchi, A., Maz'ya, V. G.: Bounds for eigenfunctions of the Laplacian on noncompact Riemannian manifolds, American Journal of Mathematics, {\bf 135 (3)}, 579--635 (2013).


\bibitem{Cr1} Croke, C. B.: A sharp four dimensional isoperimetric inequality, Comment. Math. Helvetici {\bf 59}, 187--192 (1984).

\bibitem{DLi} Donnelly, H., Li, P.: Pure point spectrum and negative curvature for noncompact manifolds. Duke Mathematical Journal {\bf 46  (3)}, 497--503 (1979).


\end{thebibliography}
\end{document}